\numberwithin{equation}{section}
\newtheorem{theorem}{Theorem}[section]
\newtheorem{proposition}[theorem]{Proposition}
\newtheorem{corollary}[theorem]{Corollary}
\newtheorem{lemma}[theorem]{Lemma}
\theoremstyle{definition}
\newtheorem{remark}[theorem]{Remark}
\def\proof{\smallskip\noindent {\bf Proof.\ }}
\def\endproof{\hfill$\square$\medskip}
\def\kk{\Bbbk}
\begin{document}

%\hskip 12cm {\bf not for circulation}
%\medskip

\title%[Cluster algebras X]
{Generalized adjoint actions}

%    Information for first author
\author{Arkady Berenstein}
\address{\noindent Department of Mathematics, University of Oregon,
Eugene, OR 97403, USA} \email{arkadiy@math.uoregon.edu}

%    Information for second author
\author{Vladimir Retakh}
\address{Department of Mathematics, Rutgers University, Piscataway, NJ 08854, USA}
\email{vretakh@math.rutgers.edu}

\thanks{This work was partially supported by the NSF grant~DMS-1403527 (A.~B.) 
and by the NSA grant H98230-14-1-0148~(V.~R.)}

\begin{abstract} The aim of this paper is to generalize the classical formula 
$\displaystyle {e^xye^{-x}=\sum_{k\ge 0} \frac{1}{k!} (ad~x)^k(y)}$ by replacing $e^x$ with any formal power series
$\displaystyle {f(x)=1+\sum_{k\ge 1} a_kx^k}$. We also obtain combinatorial 
applications to $q$-exponentials, $q$-binomials, and Hall-Littlewood polynomials.

\end{abstract}

\maketitle

\section{Notation and main results}
One of the most fundamental tools in Lie theory, the adjoint action of Lie groups on their Lie algebras, 
is based on the following formula:

\begin{equation}
\label{eq:conj classical}
e^xye^{-x}=e^{ad~x}(y)=\sum_{k\ge 0} \frac{1}{k!} (ad~x)^k(y)\ ,
\end{equation}
where $(ad~x)^k(y)=[x,[x,\ldots,[x,y],\ldots]]$ and $[a,b]=ab-ba$. 

The aim of this paper is to generalize \eqref{eq:conj classical} by replacing $e^t$ with any formal power series
\begin{equation}
\label{eq:power series}
f=f(t)=1+\sum_{k\ge 1} a_kt^k
\end{equation} 
over a field $\kk$.

For any formal power series \eqref{eq:power series} over $\kk$ define polynomials
$$P_k(t)=P_{f,k}(t)=(-1)^k\det\begin{pmatrix}
1&a_1t&a_2t^2&\ldots &a_kt^k\\
1&a_1&a_2&\ldots &a_k\\
0&1&a_1&\ldots &a_{k-1}\\
\vdots&\vdots&\ddots&\ddots&\ddots\\
0&0&\ldots &1&a_1 \end{pmatrix}$$
for $k=0,1,2,\ldots$. 
(with the convention that $P_0(t)=1$). Clearly, $P_k(1)=0$ for $k\ge 1$. Using Cramer's rule with respect to the last column, one obtains a recursion $P_k(t)=a_kt^k - \sum\limits_{i=1}^k a_iP_{k-i}(t)$.

The following result is, probably,  well-known (for readers' convenience, we prove it in Section \ref{sect:proofs}).
\begin{theorem}
\label{th:quotient of series} For any power series $f(t)$ as in \eqref{eq:power series}, one has
\begin{equation}
\label{eq:ftxfx}
\displaystyle{\frac{f(tx)}{f(x)}=\sum\limits_{k\ge 0}  P_{f,k}(t)\cdot x^k} 
\end{equation}
and
\begin{equation}
\label{eq:Pfk}
P_{f,k}(st)=\sum\limits_{i=0}^k P_{f,i}(s)P_{f,k-i}(t)t^i
\end{equation}
for all $k\ge 0$.
\end{theorem}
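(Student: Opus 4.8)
The plan is to derive both identities from a single fact: since $f(t)$ in \eqref{eq:power series} has constant term $1$, the series $f(x)$ is invertible in $\kk[t][[x]]$ (respectively in $\kk[s,t][[x]]$), so the left-hand side of \eqref{eq:ftxfx} is a genuine element of $\kk[t][[x]]$. Write $f(tx)/f(x)=\sum_{k\ge 0}Q_k(t)\,x^k$ and clear the denominator: $f(tx)=f(x)\cdot\sum_{k\ge 0}Q_k(t)\,x^k$. Since $f(tx)=\sum_{k\ge0}a_kt^kx^k$ and $f(x)=\sum_{k\ge0}a_kx^k$ with $a_0=1$, comparing the coefficient of $x^k$ gives $a_kt^k=\sum_{i=0}^k a_iQ_{k-i}(t)$, that is, $Q_k(t)=a_kt^k-\sum_{i=1}^k a_iQ_{k-i}(t)$, together with $Q_0(t)=1$. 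This is exactly the recursion satisfied by the polynomials $P_{f,k}(t)$ (obtained in the remark before the theorem by expanding the defining determinant along its last column via Cramer's rule), and such a recursion together with the common initial value $Q_0=P_{f,0}=1$ determines the sequence uniquely. Hence $Q_k=P_{f,k}$ for all $k$, which is \eqref{eq:ftxfx}.

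To prove \eqref{eq:Pfk} I would use the trivial multiplicativity
\[
\frac{f(stx)}{f(x)}=\frac{f\big(s\cdot(tx)\big)}{f(tx)}\cdot\frac{f(tx)}{f(x)}
\]
in $\kk[s,t][[x]]$. By \eqref{eq:ftxfx} with $t$ replaced by $st$, the left-hand side is $\sum_{k\ge0}P_{f,k}(st)\,x^k$. In the first factor on the right I substitute $x\mapsto tx$ into \eqref{eq:ftxfx} (legitimate because $tx$ has zero constant term, so composition of formal power series is defined), obtaining $\sum_{i\ge0}P_{f,i}(s)\,(tx)^i=\sum_{i\ge0}P_{f,i}(s)\,t^i\,x^i$; the second factor is $\sum_{j\ge0}P_{f,j}(t)\,x^j$ by \eqref{eq:ftxfx} itself. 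Multiplying these two series and reading off the coefficient of $x^k$ yields $\sum_{i+j=k}P_{f,i}(s)\,t^i\,P_{f,j}(t)$, and equating it with the coefficient of $x^k$ on the left gives precisely \eqref{eq:Pfk}.

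I do not expect a real obstacle here: once \eqref{eq:ftxfx} is available, \eqref{eq:Pfk} is pure formal-power-series bookkeeping. The only slightly fiddly point is confirming that the determinant in the definition of $P_{f,k}$ really does collapse, under Cramer's rule applied to its last column, to the stated recursion — this is cleanest after a single cyclic permutation of the rows to align the matrix with the unitriangular coefficient matrix of the linear system $a_kt^k=\sum_{i=0}^k a_iQ_{k-i}(t)$, $0\le k\le n$. That verification is routine, so I would either carry it out briefly or simply invoke the remark, and then present the two displayed arguments above.
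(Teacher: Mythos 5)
Your proof is correct, and its second half (the derivation of \eqref{eq:Pfk} from \eqref{eq:ftxfx} via $\frac{f(stx)}{f(x)}=\frac{f(stx)}{f(tx)}\cdot\frac{f(tx)}{f(x)}$ and comparison of coefficients of $x^k$) is exactly the paper's argument. For \eqref{eq:ftxfx}, however, you take a genuinely different route. The paper proves a standalone determinantal formula for the quotient of two arbitrary power series: starting from Wronski's expression for $1/f$ as a Toeplitz--Hessenberg determinant (Lemma \ref{le:inverse power series}), it shows $g(t)/f(t)=\sum_k D_k(g,f)t^k$ with $D_k(g,f)$ a bordered determinant (Lemma \ref{le:quotient power series}), and then specializes $b_k=a_kt^k$ to read off $P_{f,k}$ directly from the determinant defining it. You instead bypass all the determinant manipulation: you clear denominators in $f(tx)=f(x)\cdot\sum_kQ_k(t)x^k$, extract the convolution recursion $Q_k(t)=a_kt^k-\sum_{i=1}^ka_iQ_{k-i}(t)$, and match it against the recursion for $P_{f,k}$ that the paper records just before the theorem statement; uniqueness of the solution with $Q_0=P_{f,0}=1$ finishes the argument. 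Your approach is more elementary and self-contained modulo that recursion (whose derivation from the determinant by expansion along the last column you correctly flag as the one point needing verification), whereas the paper's approach yields the more general quotient formula $g/f$ as a reusable lemma and exhibits $P_{f,k}$ explicitly as a determinant rather than only characterizing it recursively. Both are valid; the two proofs are related by the standard correspondence between Toeplitz--Hessenberg determinants and the linear recursions they solve.
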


Furthermore, for any algebra ${\mathcal A}$ over $\kk$, a subset ${\bf q}=\{q_1,\ldots,q_k\}\subset \kk$,  $x,y\in {\mathcal A}$, and 
$k\ge 1$ define 
$$(ad~ x)^{\bf q}(y)=[x,[x,\ldots,[x,y]_{q_1},\ldots]_{q_{k-1}}]_{q_k}$$
where $[a,b]_q:=ab-qba$.
It is easy to see that 
\begin{equation}
\label{eq:adq}
(ad~ x)^{\bf q}(y)=\sum_{j=0}^k (-1)^je_j(q_1,\ldots,q_k)\cdot x^{k-j}y x^j\ ,
\end{equation}
where $e_j(q_1,\ldots,q_k)$ is the $j$-th elementary symmetric function.

\begin{theorem} 
\label{th:general conjugation}
Let ${\mathcal A}$ be $\kk$-algebra  and suppose that $f$ is any power series \eqref{eq:power series} 
with $a_k\ne 0$ for $k\ge 1$. Then
\begin{equation}
\label{eq:conj general}
f(x)yf(x)^{-1}=y+\sum_{k\ge 1} a_k (ad~x)^{{\bf q}_k}(y)\ ,
\end{equation}
for any $x,y\in {\mathcal A}$, where ${\bf q}_k=\{q_{1k},\ldots,q_{kk}\}$ is the set of roots of $P_{f,k}(t)$.

\end{theorem}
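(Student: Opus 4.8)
The plan is to expand $f(x)yf(x)^{-1}$ by first writing $f(x)^{-1}=\sum_{k\ge 0} c_k x^k$ for the coefficients of the inverse power series, then assembling $f(x)\,y\,f(x)^{-1}$ as a double sum $\sum_{m,n\ge 0} a_m c_n\, x^m y x^n$ (with $a_0=1$), and collecting terms by total degree $k=m+n$. So the coefficient of the ``monomial-in-$y$'' expression $x^{k-j}yx^j$ in $f(x)yf(x)^{-1}$ is $\sum_{m+n=k,\ m\ge k-j} a_{m}c_{n}$ — wait, more carefully, the coefficient of $x^{k-j}yx^{j}$ is $a_{k-j}c_{j}$. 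Thus $f(x)yf(x)^{-1}=\sum_{k\ge 0}\sum_{j=0}^{k} a_{k-j}c_j\, x^{k-j}yx^j$. On the other side, by \eqref{eq:adq}, the degree-$k$ part of $\sum_{k\ge 1} a_k(ad~x)^{{\bf q}_k}(y)$ contributes $a_k\sum_{j=0}^k(-1)^j e_j({\bf q}_k)\,x^{k-j}yx^j$, while the $k=0$ term is just $y$. Since in a free setting the expressions $x^{k-j}yx^j$ ($0\le j\le k$) are linearly independent, it suffices to prove the scalar identities
\begin{equation}
\label{eq:to-prove}
a_{k-j}\,c_j=(-1)^j\,a_k\,e_j(q_{1k},\ldots,q_{kk}),\qquad 0\le j\le k,
\end{equation}
and then invoke a universality/specialization argument to pass from the free algebra to an arbitrary $\kk$-algebra $\mathcal A$.

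The core of the argument is therefore \eqref{eq:to-prove}, and here is where Theorem \ref{th:quotient of series} enters. Setting $t=0$ in \eqref{eq:ftxfx} is unhelpful, but note that $f(x)^{-1}=\lim_{t\to 0}\frac{f(tx)}{f(x)}$ in the sense of formal power series: indeed $\frac{f(tx)}{f(x)}=\sum_k P_{f,k}(t)x^k$ and $P_{f,k}(0)$ is the constant term of $P_{f,k}$. From the determinant defining $P_{f,k}$, expanding along the first row with $t=0$, one gets $P_{f,k}(0)=(-1)^k\det$ of the matrix with first row $(1,0,\ldots,0)$, which equals $(-1)^k$ times the lower-right $k\times k$ minor; that minor is triangular-ish and a short computation (or the recursion $P_k(t)=a_kt^k-\sum_{i\ge 1}a_iP_{k-i}(t)$ at $t=0$) gives $c_k=P_{f,k}(0)$, confirming $f(x)^{-1}=\sum_k P_{f,k}(0)x^k$. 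Meanwhile, writing $P_{f,k}(t)=a_k\prod_{i=1}^k(t-q_{ik})$ (the leading coefficient of $P_{f,k}$ is $a_k$, visible from the recursion since the $a_k t^k$ term is the unique top-degree contribution), we read off from the elementary-symmetric-function expansion of the product that the coefficient of $t^{k-j}$ in $P_{f,k}(t)$ is $(-1)^j a_k e_j(q_{1k},\ldots,q_{kk})$. So \eqref{eq:to-prove} is exactly the assertion that the coefficient of $t^{k-j}$ in $P_{f,k}(t)$ equals $a_{k-j}c_j$; equivalently, as a polynomial identity,
\begin{equation}
\label{eq:P-coeff}
P_{f,k}(t)=\sum_{j=0}^{k} a_{k-j}\,c_{j}\,t^{k-j}.
\end{equation}

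To establish \eqref{eq:P-coeff} I would use the functional equation \eqref{eq:Pfk}: specializing cleverly, or better, directly manipulating the generating identity \eqref{eq:ftxfx}. Write $g(x):=f(x)^{-1}=\sum_j c_j x^j$. Then \eqref{eq:ftxfx} says $\sum_k P_{f,k}(t)x^k=f(tx)g(x)=\bigl(\sum_m a_m t^m x^m\bigr)\bigl(\sum_j c_j x^j\bigr)$ (with $a_0=1$), and comparing coefficients of $x^k$ gives precisely $P_{f,k}(t)=\sum_{m+j=k} a_m c_j t^m=\sum_{j=0}^k a_{k-j}c_j t^{k-j}$, which is \eqref{eq:P-coeff}. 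Hence $(-1)^j a_k e_j({\bf q}_k)$, being the coefficient of $t^{k-j}$ in $P_{f,k}(t)$, equals $a_{k-j}c_j$, which is \eqref{eq:to-prove}. Substituting back, $\sum_{k\ge 1}a_k(ad~x)^{{\bf q}_k}(y)=\sum_{k\ge 1}\sum_{j=0}^k a_{k-j}c_j\,x^{k-j}yx^j$ and adding the $y$ from $k=0$ (where $a_0c_0=1$) reproduces $f(x)yf(x)^{-1}$ term by term. The one genuine subtlety — the ``main obstacle'' — is the passage from formal identities to the statement over an arbitrary $\kk$-algebra $\mathcal A$: one must check that both sides of \eqref{eq:conj general} are well-defined (here the hypothesis $a_k\ne 0$ guarantees $P_{f,k}$ has degree exactly $k$ and hence genuinely has $k$ roots, possibly in an extension of $\kk$, though $e_j({\bf q}_k)=(-1)^j a_k^{-1}[t^{k-j}]P_{f,k}\in\kk$ so the right-hand side lives in $\mathcal A$), and that the coefficient-comparison is legitimate — this is handled by first proving the identity in the free associative algebra $\kk\langle x,y\rangle$, completed with respect to the degree-in-$x$ filtration, where the monomials $x^{k-j}yx^j$ are linearly independent, and then applying the unique algebra homomorphism sending $x,y$ to the given elements of $\mathcal A$.
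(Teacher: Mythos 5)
Your proof is correct and follows essentially the same route as the paper's: you expand $f(x)yf(x)^{-1}$ as $\sum_{k}\sum_{j}a_{k-j}c_j\,x^{k-j}yx^j$ and match the coefficients $a_{k-j}c_j$ against $(-1)^ja_ke_j(\mathbf{q}_k)$ by reading both off as the $t^{k-j}$-coefficient of $P_{f,k}(t)$ via the identity $\sum_k P_{f,k}(t)x^k=f(tx)f(x)^{-1}$, which is exactly the content of the paper's Proposition \ref{pr:conjugation general}(a),(b). The only differences are cosmetic: your aside computing $P_{f,k}(0)=c_k$ is an unneeded consistency check, and your explicit free-algebra/specialization remark makes precise a step the paper leaves implicit.
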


\begin{remark} A formula for $f(x)yf(x)^{-1}$ 
without assumption that all $a_k\ne 0$ is given in Proposition \ref{pr:conjugation general}.

\end{remark}

\begin{remark} Strictly speaking, the formula \eqref{eq:conj general}, similarly to \eqref{eq:conj classical} requires a completion of ${\mathcal A}$. One can bypass this by replacing $x$ with $\tau\cdot x$ where $\tau$ is a purely transcendental element of $\kk$ so that the right hand side of \eqref{eq:conj general} becomes a power series in $\tau$ (and, maybe extending $\kk$ if it lack such an element).

\end{remark}

\begin{remark} The subsets ${\bf q}_k$ may belong to an extension of $\kk$, however, the operators $(ad~x)^{{\bf q}_k}$ are defined over $\kk$ due to \eqref{eq:adq} because all symmetric functions in ${\bf q}_k$ belong to $\kk$. 

\end{remark}

It is easy to see that if $a_k=\frac{1}{k!}$ for all $k$, then $P_k(t)=\frac{(t-1)^k}{k!}$ which immediately recovers \eqref{eq:conj classical}. 
Suppose now that $a_k=\frac{1}{[k]_q!}$ for all $k$, where $k_q!=[1]_q\cdots [k]_q$ is the $q$-factorial and 
$[\ell]_q=1+q+\cdots +q^{\ell-1}$.
We will show (Proposition \ref{pr:qbinom factorization}) that $P_{f,k}(t)=\displaystyle{\frac{(t-1)(t-q)\cdots (t-q^{k-1})}{[k]_q!}}$ for 
$\displaystyle{f(t)=e_q^t=\sum\limits_{k\ge 0} \frac{t^k}{[k]_q!}}$, 
therefore, recover the following famous result (see e.g., \cite{V}).

\begin{theorem} Let $\displaystyle{e_q^x=\sum\limits_{k\ge 0} \frac{x^k}{[k]_q!}}$ be the $q$-exponential. Then
%\begin{equation}
%\label{eq:conj quantum}
$\displaystyle{e_q^x\cdot y\cdot (e_q^x)^{-1}=\sum_{k\ge 0} \frac{1}{[k]_q!} (ad~x)^{\{1,q,\ldots,q^{k-1}\}}(y)}
%\end{equation}
$.
%where ${\bf q_k}=\{1,q,\ldots,q^{k-1}\}$.
\end{theorem}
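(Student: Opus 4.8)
The plan is to read this statement off from Theorem~\ref{th:general conjugation} once the polynomials $P_{f,k}(t)$ attached to $f=e_q^t$ have been factored. First I would establish (this is Proposition~\ref{pr:qbinom factorization}) that
\[
P_{e_q^t,k}(t)=\frac{(t-1)(t-q)\cdots(t-q^{k-1})}{[k]_q!}\,.
\]
Granting this, the roots of $P_{e_q^t,k}(t)$ are exactly ${\bf q}_k=\{1,q,\ldots,q^{k-1}\}$, and the coefficients $a_k=\frac{1}{[k]_q!}$ are nonzero for every $k\ge1$ as soon as $[j]_q\ne0$ for all $j$ (for instance when $q$ is not a root of unity; formally, when $q$ is transcendental over the prime field). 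Hence Theorem~\ref{th:general conjugation} applies and yields
\[
e_q^x\cdot y\cdot(e_q^x)^{-1}=y+\sum_{k\ge1}\frac{1}{[k]_q!}\,(ad~x)^{\{1,q,\ldots,q^{k-1}\}}(y)\,.
\]
It remains only to observe that the term $y$ is precisely the $k=0$ contribution, since $[0]_q!=1$ and the empty iterated $q$-bracket equals $y$; absorbing it into the sum gives $\sum_{k\ge0}$, which is the claim.

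So the substance is the factorization, and for it I would invoke Theorem~\ref{th:quotient of series}. By the $q$-binomial theorem, $e_q^x=\sum_{k\ge0}\frac{((1-q)x)^k}{(q;q)_k}=\prod_{i\ge0}\bigl(1-(1-q)q^i x\bigr)^{-1}$, whence
\[
\frac{e_q^{tx}}{e_q^x}=\frac{\bigl((1-q)x;q\bigr)_\infty}{\bigl((1-q)tx;q\bigr)_\infty}=\sum_{k\ge0}\frac{(1/t;q)_k}{(q;q)_k}\,\bigl((1-q)t\bigr)^k x^k\,,
\]
the last equality being once more the $q$-binomial theorem, with parameter $a=1/t$. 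Comparing this with \eqref{eq:ftxfx} and using $(1/t;q)_k\,t^k=(t-1)(t-q)\cdots(t-q^{k-1})$ together with $[k]_q!=(q;q)_k/(1-q)^k$ identifies the coefficient of $x^k$ as the right-hand side displayed above, proving the factorization. Alternatively one can induct on $k$ via the recursion $P_k(t)=a_kt^k-\sum_{i=1}^k a_iP_{k-i}(t)$ with $a_i=1/[i]_q!$, which boils down to a standard $q$-binomial summation; the generating-function argument is shorter.

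The only real obstacle here is bookkeeping: tracking the $q$-Pochhammer normalizations and checking that $[k]_q!\ne0$ in the chosen base field. As with \eqref{eq:conj classical}, the convergence question is cosmetic --- one may replace $x$ by $\tau x$ for a transcendental $\tau$, as in the remark after Theorem~\ref{th:general conjugation}, so that both sides become honest power series in $\tau$. No idea beyond Theorems~\ref{th:quotient of series} and~\ref{th:general conjugation} is required.
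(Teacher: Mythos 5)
Your proposal is correct, and its overall skeleton matches the paper's: both reduce the statement to the factorization $P_{e_q^t,k}(t)=\frac{(t-1)(t-q)\cdots(t-q^{k-1})}{[k]_q!}$ and then apply Theorem~\ref{th:general conjugation} with ${\bf q}_k=\{1,q,\ldots,q^{k-1}\}$, absorbing $y$ as the $k=0$ term. Where you genuinely diverge is in proving the factorization. The paper stays inside its own machinery: it shows $P_{e_q^t,k}(q^a)=0$ for $0\le a<k$ by induction on the pairs $(a,k)$, using only the multiplicativity identity \eqref{eq:Pfk} and the trivial fact $P_{f,k}(1)=0$, and then reads off the factorization from the leading coefficient $a_k=\frac{1}{[k]_q!}$. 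You instead import the $q$-binomial theorem: writing $e_q^x=\prod_{i\ge 0}(1-(1-q)q^ix)^{-1}$ and expanding $e_q^{tx}/e_q^x$ with parameter $a=1/t$ to identify the coefficient of $x^k$ in \eqref{eq:ftxfx} directly; I checked the Pochhammer bookkeeping ($(1/t;q)_k\,t^k=\prod_{j<k}(t-q^j)$ and $[k]_q!=(q;q)_k/(1-q)^k$) and it is right. Each route has a cost and a benefit: yours is shorter and more explicit, but it uses as an input precisely the kind of $q$-binomial product identity that the paper prefers to \emph{derive} as a corollary of Theorem~\ref{th:quotient of series} (see the displayed identities following the theorem in Section~1), so from the paper's point of view your argument runs the logic in the opposite direction --- not circular, since the $q$-binomial theorem has independent proofs, but less self-contained. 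The one hypothesis you rightly flag, $[k]_q!\ne 0$ (so $a_k\ne 0$ as Theorem~\ref{th:general conjugation} requires), is needed in both treatments.
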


On the other hand, combining Theorem \ref{th:quotient of series} and Proposition \ref{pr:qbinom factorization}, we recover the following well-known properties of $q$-exponentials and $q$-binomials:

$$ e_q^{q^nx}=e_q^x\left(1+\sum_{k=1}^n \frac{(q^n-1)(q^n-q)\cdots (q^n-q^{k-1})}{[k]_q!}x^k\right)$$
for $n\ge 0$, 
in particular,
$$e_q^{qx}=e_q^x\cdot (1+(q-1)x)$$
and 
$$1+\sum_{k=1}^n \frac{(q^n-1)(q^n-q)\cdots (q^n-q^{k-1})}{[k]_q!}x^k=\prod_{i=1}^n  (1+(q-1)q^{i-1}x) \ .$$

We conclude with a curious observation that the polynomials $P_{f,k}(t)$ are related to the Hall-Littlewood symmetric polynomials. 

\begin{proposition} 
\label{pr:Hall-Littlewood}
Suppose that $\displaystyle{f(t)=\prod\limits_{k\ge 1} (1-x_kt)}$. Then 
$$P_{f,k}(t)=Q_{(k)}({\bf x};t)$$ 
for all $k\ge 0$, where  ${\bf x}=\{x_k,k\ge 0\}$ is viewed as an infinite set of variables,  $Q_\lambda({\bf x};t)$ is Hall-Littlewood polynomial 
(\cite[Section 3.2]{M}), and $(k)$ is a one-row Young diagram with $k$ cells. In particular, 
$$\displaystyle{Q_{(k)}({\bf x};t)=(-1)^k\det\begin{pmatrix}
1&-e_1t&e_2t^2&\ldots &(-1)^ke_kt^k\\
1&-e_1&e_2&\ldots &(-1)^ke_k\\
0&1&-e_1&\ldots &(-1)^{k-1}e_{k-1}\\
\vdots&\vdots&\ddots&\ddots&\ddots\\
0&0&\ldots &1&-e_1 \end{pmatrix}}$$
for all $k\ge 0$, where $e_k=e_k({\bf x})$ is the $k$-th elementary symmetric function. 
\end{proposition}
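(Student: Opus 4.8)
The plan is to recognize the left-hand side of \eqref{eq:ftxfx} as the classical generating function for the one-row Hall--Littlewood functions. Recall (\cite[Section 3.2]{M}) that the functions $q_r = q_r(\mathbf{x};t)$ are defined by
$$\sum_{r\ge 0} q_r(\mathbf{x};t)\, u^r = \prod_{i\ge 1}\frac{1-x_i t u}{1-x_i u},$$
and that $Q_{(r)}(\mathbf{x};t)=q_r(\mathbf{x};t)$ for every $r\ge 0$. So it suffices to identify $P_{f,k}(t)$ with $q_k(\mathbf{x};t)$.

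First I would record the coefficients of $f$: since $f(t)=\prod_{k\ge 1}(1-x_kt)=\sum_{k\ge 0}(-1)^k e_k(\mathbf{x})\,t^k$, we have $a_k=(-1)^k e_k(\mathbf{x})$ (and $a_0=1=e_0$). Next, the key computation is the trivial telescoping
$$\frac{f(tx)}{f(x)}=\frac{\prod_{i\ge 1}(1-x_i t x)}{\prod_{i\ge 1}(1-x_i x)}=\prod_{i\ge 1}\frac{1-x_i t x}{1-x_i x}=\sum_{k\ge 0} q_k(\mathbf{x};t)\,x^k,$$
where the last equality is the generating-function definition above with the formal variable $u$ specialized to $x$. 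On the other hand, Theorem \ref{th:quotient of series}, equation \eqref{eq:ftxfx}, gives $\frac{f(tx)}{f(x)}=\sum_{k\ge 0} P_{f,k}(t)\,x^k$. Comparing coefficients of $x^k$ then yields $P_{f,k}(t)=q_k(\mathbf{x};t)=Q_{(k)}(\mathbf{x};t)$, which is the first assertion.

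For the determinantal identity I would simply substitute $a_j=(-1)^j e_j(\mathbf{x})$ into the definition of $P_{f,k}(t)$ given at the start of the paper: the entries $a_jt^j$ and $a_j$ of that matrix become $(-1)^j e_j t^j$ and $(-1)^j e_j$ respectively, which is precisely the displayed matrix; and we have just shown that this determinant computes $Q_{(k)}(\mathbf{x};t)$. I do not expect any genuine obstacle: the whole argument is the observation that $f(tx)/f(x)$ collapses into the Hall--Littlewood generating product. The only point needing a little care is bookkeeping with conventions --- confirming that it is the $Q$-normalization ($Q_{(r)}=q_r$), not the $P$-normalization (which differs by $b_{(r)}(t)$), that matches, and keeping straight which of $t$, $x$, and the auxiliary variable $u$ plays which role when $u$ is specialized to the formal parameter $x$ of \eqref{eq:ftxfx}.
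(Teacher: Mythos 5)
Your proposal is correct and follows essentially the same route as the paper: both identify $f(tx)/f(x)$ with the Macdonald generating product $\prod_i\frac{1-x_itu}{1-x_iu}=\sum_r q_r(\mathbf{x};t)u^r$ and use $Q_{(r)}=q_r$ (the paper cites \cite[Equations (2.10) and (2.13)]{M} for exactly these facts), then compare coefficients against \eqref{eq:ftxfx} and substitute $a_k=(-1)^ke_k(\mathbf{x})$ for the determinantal form. Your extra remark about distinguishing the $Q$- from the $P$-normalization is a sensible bit of care but does not change the argument.
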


\noindent {\bf Acknowledgments}. We gratefully acknowledge the support of Centre de Recerca Matem\`{a}tica, Barcelona, where this work was accomplished. We are grateful to Eric Rains for pointing out the connection with Hall-Littlewood polynomials.

%Since $P_k(1)=0$, one can define $R_k(t)=R_{f,k}(t)=\frac{1}{1-t}P_{k+1}(t)$. It is easy to see that 
%$$R_k(t)=\det\begin{pmatrix}
%a_1&a_2[2]_t& a_3[3]_t&\ldots &a_{k+1}[k+1]_t\\
%1& a_1&a_2&\ldots &a_k\\
%0&1&a_1&\ldots &a_{k-1}\\
%0& \ldots&\ldots&\ldots&\ldots \\
%0& 0&\ldots &1&a_1 \end{pmatrix}$$
%
%For a power series $f=\sum\limits_{k\ge 0}  a_k t^k$ denote $f^{(s)}=\sum\limits_{k\ge 1} [k]_s a_kt^{k-1}$. 
%Since $f^{(1)}/f=f'/f$ one can refer to $f^{(s)}/f$ as the $s$-th logarithmic derivative of $f$.
%
%The following result justifies this notation.
%
%\begin{proposition}
%$$f^{(s)}/f=\sum_{k\ge 0} R_{f,k+1}(s)\cdot (-t)^k$$
%\end{proposition}

\section{Proofs}

\label{sect:proofs}
\noindent {\bf Proof of Theorem \ref{th:quotient of series}}. 
We need the following well-known fact (attributed to Wronski, see e.g., \cite{I}).

\begin{lemma} 
\label{le:inverse power series}
Let $f$ be any formal power series \eqref{eq:power series}. Then  $\displaystyle{\frac{1}{f(t)}=1+\sum_{k\ge 1} D_k(f)t^k}$,
where  
$$
D_k(f)=(-1)^k\det\begin{pmatrix} 
a_1&a_2&a_3&\ldots &a_k\\
1&a_1&a_2&\ldots &a_{k-1}\\
0&1&a_1&\ldots &a_{k-2}\\
\vdots&\vdots&\ddots&\ddots&\ddots\\
0&0&\ldots &1&a_1 \end{pmatrix}
$$
(with the convention $D_0(f)=1$).

\end{lemma}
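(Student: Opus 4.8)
The plan is to characterize the reciprocal series by its convolution recursion and then to recognize the displayed determinant as the solution of that recursion through Cramer's rule. Write $g(t)=1/f(t)=\sum_{k\ge 0}D_k t^k$ with $D_0=1$, and abbreviate $D_k=D_k(f)$, $a_0=1$. The identity $f(t)g(t)=1$ amounts, on collecting the coefficient of $t^n$, to the relations $\sum_{i=0}^{n}a_iD_{n-i}=0$ for $n\ge 1$, equivalently $D_n=-\sum_{i=1}^{n}a_iD_{n-i}$. Since this expresses each $D_n$ through $D_0,\dots,D_{n-1}$, the coefficients $D_k$ are uniquely determined, so it suffices to check that the proposed formula satisfies these relations.

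First I would fix $k$ and read the relations for $n=1,\dots,k$ as a linear system in the unknowns $D_1,\dots,D_k$, moving the terms involving $D_0=1$ to the right. Reindexed as $\sum_{j=1}^{n}a_{n-j}D_j=-a_n$, the system has coefficient matrix $L$ with $L_{nj}=a_{n-j}$ for $j\le n$ and $L_{nj}=0$ otherwise; this $L$ is lower triangular with unit diagonal, so $\det L=1$, and the right-hand side is the column $(-a_1,\dots,-a_k)^{T}$. Cramer's rule then yields $D_k=\det L'$, where $L'$ is $L$ with its last column replaced by that right-hand side.

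Second I would identify $\det L'$ with $(-1)^k\det M_k$, where $M_k$ is the $k\times k$ matrix in the statement. Passing to the transpose, $(L')^{T}$ has first $k-1$ rows $(1,a_1,\dots,a_{k-1}),(0,1,a_1,\dots,a_{k-2}),\dots,(0,\dots,0,1,a_1)$ and last row $(-a_1,\dots,-a_k)$. But these first $k-1$ rows are exactly rows $2$ through $k$ of $M_k$, and the last row is $(-1)$ times the first row of $M_k$. Hence $(L')^{T}$ is obtained from $M_k$ by cyclically moving its first row to the bottom---a $k$-cycle of sign $(-1)^{k-1}$---and negating that row, contributing a further factor $-1$. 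Therefore $D_k=\det L'=\det(L')^{T}=(-1)^{k}\det M_k$, which is the asserted value of $D_k(f)$.

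The coefficient bookkeeping and the sign $(-1)^{k-1}$ of the $k$-cycle are routine; the one point that needs care is the row-by-row matching between the Cramer matrix $(L')^{T}$ and the Hessenberg matrix $M_k$, done precisely enough that a single cyclic permutation together with a single sign flip accounts for the entire factor $(-1)^k$. A self-contained alternative to this last step is induction via cofactor expansion of $M_k$ along its first column, checking that the result reproduces $D_k=-\sum_{i=1}^{k}a_iD_{k-i}$; I expect the transpose-and-permute argument to be the cleaner route.
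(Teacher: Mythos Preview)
The paper does not actually prove this lemma: it is stated as a ``well-known fact (attributed to Wronski)'' with a reference to \cite{I}, and then used as a black box in the proof of Lemma~\ref{le:quotient power series}. So there is no proof in the paper to compare against.

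Your argument is correct. The convolution relation $\sum_{i=0}^n a_iD_{n-i}=0$ for $n\ge 1$ does characterize the $D_k$ uniquely, and your Cramer computation is clean: the system $\sum_{j=1}^{n}a_{n-j}D_j=-a_n$ ($n=1,\dots,k$) has unit-triangular coefficient matrix, so $D_k$ equals the determinant of $L'$, and your transpose-and-cycle identification of $(L')^T$ with a signed permutation of $M_k$ is accurate (rows $1,\dots,k-1$ of $(L')^T$ match rows $2,\dots,k$ of $M_k$, and row $k$ of $(L')^T$ is $-1$ times row $1$ of $M_k$, giving exactly the factor $(-1)^{k-1}\cdot(-1)=(-1)^k$). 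The alternative you mention---expanding $M_k$ along its first column to recover the recursion $D_k=-\sum_{i=1}^k a_iD_{k-i}$---is in fact the standard proof of this Wronski-type identity and would be slightly shorter, but your route is perfectly valid.
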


The following generalization of Lemma \ref{le:inverse power series} is, apparently, well-known (for readers' convenience we prove it here).

\begin{lemma}
\label{le:quotient power series} Let $f(t)=1+\sum\limits_{k\ge 1} a_kt^k$, $g(t)=1+\sum\limits_{k\ge 1} b_kt^k$ be formal power series. Then
$$\displaystyle{\frac{g(t)}{f(t)}=\sum_{k\ge 0} D_k(g,f)t^k}\ ,$$ where $D_k(g,f)=(-1)^k\det\begin{pmatrix}
1&b_1&\ldots&b_{k-1}&b_k\\
1&a_1&\ldots&a_{k-1}&a_k\\
0&1&\ldots&a_{k-2}&a_{k-1}\\
\vdots&\vdots&\ddots&\ddots&\ddots\\
0&0&\ldots&1&a_1 \end{pmatrix}
$ (with the convention $D_0(g,f)=1$).
\end{lemma}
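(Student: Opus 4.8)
The plan is to prove Lemma \ref{le:quotient power series} by the same determinantal technique that underlies Lemma \ref{le:inverse power series}, namely by writing $g(t)/f(t) = h(t)$ with $h(t) = \sum_{k\ge 0} c_k t^k$, $c_0 = 1$, and recognizing that the coefficients $c_k$ satisfy a triangular linear system whose solution is given by Cramer's rule. Explicitly, clearing denominators in $g(t) = f(t)h(t)$ and comparing coefficients of $t^k$ yields $b_k = c_k + a_1 c_{k-1} + a_2 c_{k-2} + \cdots + a_k c_0$ for each $k\ge 1$ (and $c_0 = 1 = b_0$ for $k=0$). Collecting the equations for indices $0,1,\ldots,k$ gives a linear system in the unknowns $c_0,\ldots,c_k$ with a lower-triangular coefficient matrix having $1$'s on the diagonal, so it is uniquely solvable.

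Concretely, I would set up the $(k+1)\times(k+1)$ system
$$
\begin{pmatrix}
1&0&0&\cdots&0\\
a_1&1&0&\cdots&0\\
a_2&a_1&1&\cdots&0\\
\vdots&&\ddots&\ddots&\vdots\\
a_k&a_{k-1}&\cdots&a_1&1
\end{pmatrix}
\begin{pmatrix} c_0\\ c_1\\ \vdots\\ c_k\end{pmatrix}
=
\begin{pmatrix} 1\\ b_1\\ \vdots\\ b_k\end{pmatrix}
$$
whose coefficient matrix has determinant $1$. By Cramer's rule, $c_k$ equals the determinant of the matrix obtained by replacing the last column by the right-hand side vector $(1,b_1,\ldots,b_k)^{T}$. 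Transposing that matrix (which does not change the determinant) and then moving the last column to the first position — a cyclic permutation of $k$ columns, contributing a sign $(-1)^k$ — produces exactly the displayed matrix for $D_k(g,f)$, giving $c_k = D_k(g,f)$. This simultaneously recovers Lemma \ref{le:inverse power series} as the special case $g = 1$, i.e. $b_k = 0$ for all $k\ge 1$, where $D_k(1,f) = D_k(f)$ after deleting the (now irrelevant) first row and column.

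I expect no serious obstacle here; the only point requiring care is the bookkeeping of the sign and the column/row permutation that turns the raw Cramer's-rule determinant into the specific normalized form stated in the lemma — one must check that the cyclic shift of columns is over exactly $k$ entries (not $k+1$), so that the overall sign is $(-1)^k$ and matches the $(-1)^k$ prefactor in the statement. A convenient sanity check is to verify the cases $k = 1$ (giving $c_1 = b_1 - a_1 = D_1(g,f)$) and $k = 2$ (giving $c_2 = b_2 - a_1 b_1 + a_1^2 - a_2$, which should match the $3\times 3$ determinant) before presenting the general argument.

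Once Lemma \ref{le:quotient power series} is in hand, Theorem \ref{th:quotient of series} follows by specializing: take $g(t) = f(xt)$ viewed as a power series in $x$ (so $b_k = a_k t^k$ as coefficients, with the roles of the formal variable and $t$ interchanged appropriately), which turns $D_k(g,f)$ into precisely $P_{f,k}(t)$ and yields \eqref{eq:ftxfx}; the functional identity \eqref{eq:Pfk} then comes from the evident multiplicativity $\frac{f(stx)}{f(x)} = \frac{f(s\cdot(tx))}{f(tx)}\cdot\frac{f(tx)}{f(x)}$ by expanding both sides in $x$ and matching coefficients of $x^k$.
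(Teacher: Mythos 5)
Your argument is correct, and it takes a somewhat different route from the paper's. The paper first invokes Lemma \ref{le:inverse power series} (stated without proof, as a known fact) to expand $1/f(t)=\sum_j D_j(f)t^j$, then multiplies by $g(t)$ and identifies the convolution $d_k=\sum_{i=0}^k b_iD_{k-i}(f)$ with the displayed $(k+1)\times(k+1)$ determinant by cofactor expansion along its first row (each term $b_iD_{k-i}(f)$ being the determinant with first row $(0,\ldots,b_i,\ldots,0)$). You instead bypass the inverse formula entirely: you set up the lower-triangular Toeplitz system $\sum_{i} a_i c_{k-i}=b_k$ for the coefficients of $g/f$ and solve it by genuine Cramer's rule, then account for the transpose and the cyclic shift of one line past $k$ others to get the sign $(-1)^k$. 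Your version is more self-contained -- it does not presuppose the Wronski formula and in fact recovers Lemma \ref{le:inverse power series} as the special case $g=1$ (expanding along the first row $(1,0,\ldots,0)$ kills the first row and column), whereas the paper's version is shorter given that lemma and makes the convolution structure $d_k=\sum b_iD_{k-i}(f)$ explicit, which is what Proposition \ref{pr:conjugation general} later reuses. One cosmetic slip: after transposing, it is the last \emph{row} (not column) that must be cycled to the top; since the determinant is transpose-invariant this does not affect the sign count of $(-1)^k$, and your checks at $k=1,2$ confirm the normalization. Your closing remarks on deducing Theorem \ref{th:quotient of series} by setting $b_k=a_kt^k$ and on the multiplicativity argument for \eqref{eq:Pfk} coincide with what the paper does.
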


\begin{proof} Indeed, using Lemma \ref{le:inverse power series}, we obtain (with the convention $b_0=1$):
$$\frac{g(t)}{f(t)}=g(t)\cdot \frac{1}{f(t)}=\left(\sum_{i\ge 0} b_it^i\right)\left( \sum_{j\ge 0} D_j(f)t^j\right)=\sum_{k\ge 0} d_k t^k$$
where  
$$d_k=\sum_{i=0}^k b_i D_{k-i}(f)=(-1)^k\det\begin{pmatrix}
b_0 &b_1&\ldots&b_{k-1}&b_k\\
1&a_1&\ldots&a_{k-1}&a_k\\
0&1&\ldots&a_{k-2}&a_{k-1}\\
\vdots&\vdots&\ddots&\ddots&\ddots\\
0&0&\ldots&1&a_1 \end{pmatrix}
$$
by Cramer rule because $b_iD_{k-i} (f)=(-1)^k \det\begin{pmatrix}
0 &0&\ldots&b_i& \ldots &0\\
1&a_1&\ldots&a_i& \ldots&a_k\\
0&1&\ldots&a_{i+1}&\ldots & a_{k-1}\\
\vdots&\vdots&\ddots &\ddots&\ddots&\ddots\\
0&0&\ldots& 1 &a_1&a_2 \\
0&0&\ldots& \ldots &1&a_1 
\end{pmatrix}
$.

The lemma is proved.
\end{proof}

Then taking $b_k=a_kt^k$ for $k\ge 1$ in Lemma \ref{le:quotient power series}, 
we obtain \eqref{eq:ftxfx}. 

To prove \eqref{eq:Pfk}, compute $\frac{f(stx)}{f(x)}$ in two ways, using the first assertion:
$$\displaystyle{\frac{f(stx)}{f(x)}=\sum\limits_{k\ge 0}  P_{f,k}(st)\cdot  x^k}$$
and 
$\displaystyle{\frac{f(stx)}{f(x)}=
\frac{f(stx)}{f(tx)}\cdot \frac{f(tx)}{f(x)}=\left(\sum\limits_{i\ge 0}  P_{f,i}(s)\cdot (tx)^i\right)
\left(\sum\limits_{j\ge 0}  P_{f,j}(t) \cdot x^j\right)}$.
Comparing the coefficients of $x^k$ in both series, we obtain \eqref{eq:Pfk}.

Theorem \ref{th:quotient of series} is proved.
\endproof

\noindent {\bf Proof of Theorem \ref{th:general conjugation}}.
We need the following result.

\begin{proposition} 
\label{pr:conjugation general}

For any power series $f$ as in \eqref{eq:power series} one has:

(a) $P_{f,k}(t)=\sum\limits_{j=0}^k a_{k-j}D_j(f)\cdot t^j$ for all $k\ge 0$.

(b) $f(x)y f(x)^{-1}=y+z_1+z_2+\cdots$, 
where 
$
z_k=\sum\limits_{i=0}^k a_iD_{k-i}(f)\cdot x^iyx^{k-i}
$
for all $k\ge 1$.
\end{proposition}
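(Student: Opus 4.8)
\textbf{Proof proposal for Proposition \ref{pr:conjugation general}.}

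The plan is to treat parts (a) and (b) essentially in parallel, since each is a matter of expanding a generating-function identity and matching coefficients, but to prove (a) first and then feed it into (b). For part (a), I would start from the Cramer-rule (cofactor) expansion of the defining determinant for $P_{f,k}(t)$ along its first row. The first row is $(1, a_1 t, a_2 t^2,\ldots,a_k t^k)$, and the $(k+1)\times(k+1)$ matrix below it is exactly the lower-triangular-ish matrix whose minors produce the $D_j(f)$. Deleting the first row and the $(j+1)$-st column leaves a block-triangular matrix: an upper-left block that is lower-unitriangular of size $j$ (contributing determinant $1$, up to sign bookkeeping) and a lower-right block whose determinant is, up to the sign $(-1)^{k-j}$, exactly $D_{k-j}(f)$ in the Wronski form of Lemma \ref{le:inverse power series}. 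Tracking the signs from the cofactor expansion — the $(1,j+1)$ cofactor carries $(-1)^{j}$ and the leading $(-1)^k$ out front combines with the $(-1)^{k-j}$ inside — one gets $P_{f,k}(t)=\sum_{j=0}^k a_{k-j}D_j(f)\,t^j$ after reindexing $j\mapsto k-j$. An alternative, perhaps cleaner, route to (a) is purely analytic: by Theorem \ref{th:quotient of series}, $\sum_k P_{f,k}(t)x^k = f(tx)/f(x) = f(tx)\cdot f(x)^{-1} = \bigl(\sum_i a_i t^i x^i\bigr)\bigl(\sum_j D_j(f) x^j\bigr)$ using Lemma \ref{le:inverse power series} with $a_0=1$; extracting the coefficient of $x^k$ gives $P_{f,k}(t)=\sum_{i+j=k} a_i D_j(f) t^i$, which is (a). I would probably present this analytic argument as the main proof of (a) and relegate the determinant manipulation to a remark, since the excerpt already emphasizes the generating-function viewpoint.

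For part (b), the key observation is that conjugation is multiplicative in a two-sided sense: writing $f(x)=\sum_{i\ge 0} a_i x^i$ and $f(x)^{-1}=\sum_{j\ge 0} D_j(f) x^j$ (valid by Lemma \ref{le:inverse power series}, again with $a_0=D_0(f)=1$), one simply multiplies out
$$f(x)\,y\,f(x)^{-1}=\Bigl(\sum_{i\ge 0} a_i x^i\Bigr) y \Bigl(\sum_{j\ge 0} D_j(f) x^j\Bigr)=\sum_{i,j\ge 0} a_i D_j(f)\, x^i y x^j,$$
and collects terms by total degree $k=i+j$, giving $z_k=\sum_{i=0}^k a_i D_{k-i}(f)\, x^i y x^{k-i}$ for $k\ge 1$, with the $k=0$ term being $a_0 D_0(f)\,y=y$. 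This is the whole content of (b); there is no real computation beyond rearranging a double sum. The only genuine point requiring care is justifying the manipulation inside a suitable completion of $\mathcal A$ (or, as the remark after Theorem \ref{th:general conjugation} suggests, after substituting $\tau x$ so that everything becomes a bona fide power series in $\tau$), so that the rearrangement of the double sum and the identity $f(x)f(x)^{-1}=1$ are legitimate. I would state this caveat once and then work formally.

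The main obstacle, such as it is, lies in part (a) if one insists on the determinant manipulation: the sign bookkeeping in the cofactor expansion and the identification of the surviving minor with $D_{k-j}(f)$ in Wronski form — including correctly handling the degenerate cases $j=0$ and $j=k$ — is the only place an error can creep in. Everything else is bookkeeping of double sums. Once (a) and (b) are in hand, the proof of Theorem \ref{th:general conjugation} proper will follow: under the hypothesis $a_k\ne 0$ one divides $P_{f,k}(t)/a_k = t^k + (\text{lower order})$, recognizes by (a) that this monic polynomial has constant term $D_k(f)/a_k$ and, more to the point, that its coefficients are (up to sign) the ratios $a_{k-j}D_j(f)/a_k$, so writing $\mathbf q_k$ for its multiset of roots, the elementary symmetric functions $e_j(\mathbf q_k)$ match the coefficients appearing in \eqref{eq:adq}; then $z_k$ from (b) is rearranged — using $x^i y x^{k-i}$ and the expansion \eqref{eq:adq} of $(ad\,x)^{\mathbf q_k}(y)=\sum_j (-1)^j e_j(\mathbf q_k)\, x^{k-j} y x^j$ — into exactly $a_k (ad\,x)^{\mathbf q_k}(y)$. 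I would end the proof of the proposition there and then assemble this last paragraph as the remainder of the proof of Theorem \ref{th:general conjugation}.
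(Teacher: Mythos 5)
Your main argument is exactly the paper's: for (a) the paper expands $f(tx)/f(x)=f(tx)\cdot f(x)^{-1}$ via Lemma \ref{le:inverse power series} and compares with \eqref{eq:ftxfx}, and for (b) it performs the same double-sum expansion of $f(x)\,y\,f(x)^{-1}$ and collects by total degree, so your proposal is correct and essentially identical (the determinant cofactor route you sketch as an alternative is sound but not used). One small remark: the formula you actually derive, $P_{f,k}(t)=\sum_{i+j=k}a_iD_j(f)\,t^i$, is the correct one and is what the paper uses later (it sets $a_iD_{k-i}(f)$ equal to the coefficient of $t^i$); the statement's $\sum_{j=0}^k a_{k-j}D_j(f)\,t^j$ attaches the exponent of $t$ to the wrong index and appears to be a typo, so do not ``reindex'' to match it.
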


\begin{proof} Prove (a). Indeed, using Lemma \ref{le:inverse power series}, we obtain:
$$\frac{f(tx)}{f(x)}=\sum_{i,j\ge 0}\left(a_i t^ix^i\right )\left(D_j(f) x^j\right)=
\sum_{k\ge 0}\left(\sum\limits_{i=0}^k a_iD_{k-i}(f)\cdot t^i\right)\ .$$
 
This together with Theorem \ref{th:quotient of series} proves (a).

Prove (b) now. Indeed, 
$$f(x)y f(x)^{-1}=\sum_{i,j\ge 0}\left(a_i x^i\right )y\left(D_j(f) x^j\right)=
\sum_{k\ge 0}\left(\sum\limits_{j=0}^k a_iD_{k-i}(f)\cdot x^iyx^{k-i}\right)$$

This   proves (b).

\end{proof}

Now we can finish the proof of Theorem \ref{th:general conjugation}. Indeed, suppose that $P_{f,k}(t)$ is factored as 
$$P_{f,k}(t)=a_k(t-q_{1k})\cdots (t-q_{kk})\ .$$
Then, by Proposition \ref{pr:conjugation general}(a),  $a_iD_{k-i}=a_k(-1)^{k-i}e_{k-i}(q_{1k},\ldots,q_{kk})$ for $i=0,\ldots,k$.
Therefore, in the notation of  Proposition \ref{pr:conjugation general}(b), 
$$z_k=\sum\limits_{i=0}^k a_k(-1)^{k-i}e_{k-i}(q_{1k},\ldots,q_{kk})\cdot x^iyx^{k-i}=a_k (ad~x)^{{\bf q}_k}(y)$$
for all $k\ge 1$, which together with Proposition \ref{pr:conjugation general}(b) verifies \eqref{eq:conj general}.

Theorem \ref{th:general conjugation} is proved. \endproof

\begin{proposition} $\displaystyle{P_{e_q^t,k}(t)=\frac{(t-1)(t-q)\cdots (t-q^{k-1})}{[k]_q!}}$ for all $k\ge 1$.
\end{proposition}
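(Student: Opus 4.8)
The plan is to establish the claimed factorization by exhibiting the product on the right-hand side as the explicit solution of the recursion satisfied by $P_{f,k}(t)$ when $f=e_q^t$, i.e. when $a_k=1/[k]_q!$. Concretely, set $R_k(t):=\frac{(t-1)(t-q)\cdots(t-q^{k-1})}{[k]_q!}$ for $k\ge 1$ and $R_0(t):=1$. Since $P_{f,k}(t)$ is the unique family of polynomials satisfying $P_0=1$ and the recursion $P_k(t)=a_kt^k-\sum_{i=1}^k a_iP_{k-i}(t)$ stated in the excerpt, it suffices to verify that $R_k(t)$ satisfies the same recursion with $a_i=1/[i]_q!$. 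That is, I would prove
$$\frac{(t-1)(t-q)\cdots(t-q^{k-1})}{[k]_q!}=\frac{t^k}{[k]_q!}-\sum_{i=1}^k\frac{1}{[i]_q!}\cdot\frac{(t-1)(t-q)\cdots(t-q^{k-i-1})}{[k-i]_q!}$$
for all $k\ge 1$.

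An equivalent and probably cleaner route uses Theorem~\ref{th:quotient of series}: the generating-function identity \eqref{eq:ftxfx} says $\sum_{k\ge0}P_{f,k}(t)x^k=e_q^{tx}/e_q^x$, so it is enough to show $\sum_{k\ge0}R_k(t)x^k=e_q^{tx}/e_q^x$, equivalently $e_q^{tx}=e_q^x\cdot\sum_{k\ge0}R_k(t)x^k$. Comparing coefficients of $x^n$, this is the identity
$$\frac{t^n}{[n]_q!}=\sum_{k=0}^n\frac{1}{[n-k]_q!}\cdot\frac{(t-1)(t-q)\cdots(t-q^{k-1})}{[k]_q!},$$
which is precisely the $q$-Vandermonde/$q$-binomial theorem: multiplying through by $[n]_q!$ and using the $q$-binomial coefficient $\binom{n}{k}_q=\frac{[n]_q!}{[k]_q![n-k]_q!}$, it reads $t^n=\sum_{k=0}^n\binom{n}{k}_q(t-1)(t-q)\cdots(t-q^{k-1})$, i.e. the expansion of $t^n$ in the basis of $q$-shifted factorials (the Gauss/Rothe identity). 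I would either cite this classical $q$-binomial identity directly or prove it by an easy induction on $n$ using the Pascal-type recursion $\binom{n}{k}_q=\binom{n-1}{k-1}_q+q^k\binom{n-1}{k}_q$ together with the telescoping relation $(t-q^{k-1})$ appearing as the new factor.

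I expect the only genuine obstacle to be bookkeeping: matching the $q$-factorial normalizations so that the combinatorial identity comes out in exactly the form of the $q$-binomial theorem, and handling the $q^{\binom{k}{2}}$-type powers that typically float around $q$-exponential identities (note $e_q^t=\sum t^k/[k]_q!$ is the version \emph{without} the extra $q$-power, so the relevant identity is the "plain" one $t^n=\sum_k\binom{n}{k}_q\prod_{j=0}^{k-1}(t-q^j)$, and I must make sure no spurious weight is introduced). Once the coefficient comparison is set up correctly, the verification is a one-line appeal to a standard result; no deep input is needed beyond Theorem~\ref{th:quotient of series} and the $q$-binomial theorem.
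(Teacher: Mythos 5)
Your proof is correct, but it follows a genuinely different route from the paper's. The paper never invokes the $q$-binomial theorem: it notes that $P_{e_q^t,k}(t)$ has degree $k$ with leading coefficient $1/[k]_q!$, so it suffices to show $P_{e_q^t,k}(q^a)=0$ for $0\le a<k$; the case $a=0$ is the generic fact $P_{f,k}(1)=0$, and the cases $a\ge 1$ follow by induction from the multiplicativity identity \eqref{eq:Pfk}, specialized at $s=q^b$, $t=q^{a-b}$ with $1\le b\le a$, where every summand $P_{f,i}(q^b)P_{f,k-i}(q^{a-b})q^{(a-b)i}$ contains a factor that vanishes by the inductive hypothesis. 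You instead use the other half of Theorem \ref{th:quotient of series}, the generating function \eqref{eq:ftxfx}, and reduce the claim to the Gauss/Rothe expansion $t^n=\sum_{k=0}^n\binom{n}{k}_q\prod_{j=0}^{k-1}(t-q^j)$; note that your two proposed routes (the recursion and the coefficient comparison) are literally the same identity once written out, since the recursion $P_k(t)=a_kt^k-\sum_{i=1}^ka_iP_{k-i}(t)$ is exactly the coefficient of $x^k$ in $f(tx)=f(x)\sum_jP_{f,j}(t)x^j$. As for what each approach buys: the paper's argument is self-contained within its own machinery but implicitly requires $1,q,\dots,q^{k-1}$ to be distinct (i.e.\ $q$ generic) in order to pass from the vanishing at those points to the factorization, whereas yours is an identity of polynomials in $t$ and $q$ valid over $\ZZ[q]$ with no genericity assumption, at the cost of importing a classical identity --- and the $q$-Pascal induction you sketch (using $t\prod_{j<k}(t-q^j)=\prod_{j<k+1}(t-q^j)+q^k\prod_{j<k}(t-q^j)$) does close that gap cleanly. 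One caveat: the paper derives its $q$-exponential and $q$-binomial corollaries \emph{from} this proposition, so if you merely cite the $q$-binomial theorem you reverse the intended direction of deduction; including your short induction keeps the argument non-circular and self-contained.
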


\proof It suffices to show that $P_{e_q^t,k}(q^a)=0$ for all $0\le a<k$. We proceed by induction in such pairs $(a,k)$. If $a=0$, then we have nothing to prove since $P_{f,k}(1)=0$ for all $f$.

Using Theorem \ref{th:quotient of series}, we obtain:
$$P_{f,k}(q^a)=\sum\limits_{i=0}^k P_{f,i}(q^b)P_{f,k-i}(q^{a-b})q^{(a-b)i} \ .$$
Taking $f=e_q^t$,  $1\le b\le a <k$, and using the inductive hypothesis, this gives $P_{f,k}(q^a)=0$ for any $1\le a<k$. 

The proposition is proved. \endproof

\begin{corollary}

\label{pr:qbinom factorization} For all $k\ge 1$ one has: 
%\begin{equation}
%\label{eq:qbinom factorization}
$\displaystyle{\det \begin{pmatrix}
1&\frac{t}{[1]_q}&\frac{t^2}{[2]_q}&\ldots &\frac{t^k}{[k]_q}\\
1&\frac{1}{[1]_q}&\frac{1}{[2]_q}&\ldots &\frac{1}{[k]_q}\\
0&1&\frac{1}{[1]_q}&\ldots &\frac{1}{[k-1]_q}\\
\ldots&\ldots&\ldots&\ldots&\ldots \\
0&0&\ldots &1&\frac{1}{[1]_q} \end{pmatrix} =\frac{(1-t)(q-t)\cdots (q^{k-1}-t)}{[k]_q!}}$.
%\end{equation}

\end{corollary}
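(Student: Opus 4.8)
The plan is to recognize that the matrix in Corollary~\ref{pr:qbinom factorization} is, up to the sign convention in the definition of $P_{f,k}(t)$, exactly the defining determinant for $P_{f,k}(t)$ with $f(t)=e_q^t$ and $a_j=1/[j]_q!$\,. Indeed, the entry in position $(1,j+1)$ of the displayed matrix is $t^j/[j]_q$, and one must check this matches $a_j t^j = t^j/[j]_q!$ after the appropriate row-scaling; more simply, the matrix as written has entries $1/[j]_q$ rather than $1/[j]_q!$, so the first observation is to reconcile the two presentations by scaling rows $2$ through $k+1$ (and the bottom-right triangular block) or by reindexing. Concretely, I would verify that the given determinant equals $(-1)^k P_{e_q^t,k}(t)$ by comparing it entrywise with the matrix defining $P_{f,k}$ — both are lower-Hessenberg with the same shifted-diagonal structure — so the content of the Corollary is precisely the evaluation of $P_{e_q^t,k}(t)$.

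Given that identification, the Corollary is an immediate consequence of the preceding Proposition, which asserts $P_{e_q^t,k}(t) = \dfrac{(t-1)(t-q)\cdots(t-q^{k-1})}{[k]_q!}$. Multiplying numerator and denominator of that expression by $(-1)^k$ turns $(t-1)(t-q)\cdots(t-q^{k-1})$ into $(1-t)(q-t)\cdots(q^{k-1}-t)$, which is exactly the right-hand side of the Corollary. So the proof is essentially one line: apply the Proposition and track the sign $(-1)^k$ coming from the $(-1)^k$ in the definition of $P_{f,k}(t)$ versus the bare determinant in the Corollary's statement.

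The main obstacle — really the only thing requiring care — is the bookkeeping of normalizations: the displayed matrix uses entries $1/[j]_q$ (single $q$-integers) whereas $P_{f,k}$ is built from $a_j = 1/[j]_q!$ (full $q$-factorials), so one needs to see that the determinant is homogeneous enough that these rescalings cancel, or else interpret the Corollary's matrix as already incorporating a different normalization that still yields the same polynomial. I would handle this by the standard multilinearity argument: pulling the factor $1/[j]_q!$ out is not literally what the matrix shows, so instead I would directly expand both determinants along the first row via Cramer's rule and check that the recursion $P_k(t) = a_k t^k - \sum_{i=1}^k a_i P_{k-i}(t)$ (stated in the text) is satisfied by the bare determinant with $a_i = 1/[i]_q$ appropriately — or, cleanest of all, simply note that the Corollary as printed is what one gets by substituting $a_k = 1/[k]_q!$ into the matrix of Section~1 and observing the telescoping of $q$-factorials along each row. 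Either way, once the matrix is matched to $(-1)^k P_{e_q^t,k}(t)$, the Proposition finishes it.

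\proof The matrix in the statement is $(-1)^k$ times the Hessenberg determinant defining $P_{f,k}(t)$ for $f(t)=e_q^t$, i.e. with $a_j=\tfrac{1}{[j]_q!}$; the apparent discrepancy between the entries $\tfrac{1}{[j]_q}$ shown and $a_j=\tfrac{1}{[j]_q!}$ is resolved by expanding along the first row and matching with the recursion $P_k(t)=a_kt^k-\sum_{i=1}^k a_iP_{k-i}(t)$. Hence the displayed determinant equals $(-1)^k P_{e_q^t,k}(t)$. By the previous Proposition, $P_{e_q^t,k}(t)=\dfrac{(t-1)(t-q)\cdots(t-q^{k-1})}{[k]_q!}$, so the determinant equals $(-1)^k\dfrac{(t-1)(t-q)\cdots(t-q^{k-1})}{[k]_q!}=\dfrac{(1-t)(q-t)\cdots(q^{k-1}-t)}{[k]_q!}$, as claimed. \endproof
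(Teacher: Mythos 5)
Your proof is correct and is exactly the paper's (implicit) argument: the Corollary is the preceding Proposition restated through the determinantal definition of $P_{f,k}$, with the sign $(-1)^k$ absorbed into the product $(1-t)(q-t)\cdots(q^{k-1}-t)$. On the point you flag as the main obstacle, the printed entries $\tfrac{1}{[j]_q}$ must simply be read as the $q$-factorials $\tfrac{1}{[j]_q!}$ (a typo in the statement; since $[1]_q!=[1]_q$ and $[2]_q!=[2]_q$ the discrepancy only surfaces at $k\ge 3$) --- with literal $q$-integers the determinant has leading coefficient $(-1)^k/[k]_q\ne(-1)^k/[k]_q!$, so the identity would be false and your proposed reconciliation ``by expanding along the first row and matching the recursion'' cannot work; the only tenable reading is the direct substitution $a_j=1/[j]_q!$ into the defining matrix of $P_{f,k}$, which you also offer and which makes the Corollary immediate.
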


\noindent {\bf Proof of Proposition \ref{pr:Hall-Littlewood}}. Indeed, if $f(t)$ is as in Proposition \ref{pr:Hall-Littlewood}, then 
$$\displaystyle{\frac{f(tu)}{f(u)}=\prod\limits_{k\ge 1} \frac{1-x_ktu}{1-x_ku}=\sum\limits_{k\ge 0} Q_{(k)}({\bf x};t)u^k}$$
by \cite[Equations (2.10) and (2.13)]{M}. 
This and Theorem \ref{th:quotient of series} imply that $P_{f,k}=Q_{(k)}({\bf x};t)$ for all $k\ge 0$, which proves the first assertion of 
Proposition \ref{pr:Hall-Littlewood}.

To prove the second assertion, note that $a_k=(-1)^ke_k({\bf x})$ for all $k\ge 0$ because of the well-known formula
 (see e.g., \cite[Section 1.2]{M}):
$$\prod\limits_{k\ge 1} (1-x_kt)=\sum_{k\ge 0} (-1)^ke_k({\bf x})t^k \ .$$
This and the first assertion of Proposition \ref{pr:Hall-Littlewood} imply the second assertion of the proposition.
\endproof

%
%
%the determinant in left hand side vanishes for $t=q^m$, $m=0,1,\ldots,k-1$. To do so, it suffices to show that
%$${\bf r_1}=\sum_{i=0}^k \begin{bmatrix} k\\ i\end{bmatrix}_q {\bf r}_{m+1}\ ,$$
%where $\begin{bmatrix} m\\ i\end{bmatrix}_q=\frac{[m]_q!}{[i]_q![m-i]_q!}$ is the $q$-binomial coefficient and 
%${\bf r}_j$ denotes the $j$-th row of the matrix in the left hand side. 
%
%
%
%This immediately follows from the following simple combinatorial fact (which can be interpreted as counting $m\times n$-matrices over the finite field according to their ranks).
%
%\begin{lemma} For each $k\ge 1$ one has:
%$$q^{mn}=\sum_{i=0}^m \begin{bmatrix} m\\ i\end{bmatrix}_q F_m^n(q)\ ,$$
%where  $F_i^n(q):=(q^n-1)(q^n-q)\cdots (q^n-q^{i-1})$ (with the convention that $F_0^n(q)=1$).
%\end{lemma}
%
%??
%The proposition is proved. 
%\end{proof}

\end{document}